\newtheorem{defi}{Définition}[section]
      \newtheorem{prop}[defi]{Proposition}
      \newtheorem{prop-def}[defi]{Proposition-Définition}
      \newtheorem{def-prop}[defi]{Définition-Proposition}
      \newtheorem{thm}[defi]{Théorème}
      \newtheorem{lem}[defi]{Lemme}
      \newtheorem{cor}[defi]{Corollaire}
\def\geod{{\text {géod}}}
\def\N{{\mathbf N}}
\title[Actions affines isométriques propres des groupes hyperboliques]{Actions affines isométriques propres des groupes hyperboliques sur des quotients d'espaces $\ell^p$}
\author{Aurélien Alvarez et Vincent Lafforgue}
\address{Aurélien Alvarez fait partie du projet ANR-14-CE25-0004 GAMME.\newline
MAPMO, UMR 7349, Université d'Orléans \newline Rue de Chartres, BP 6759 - 45067 Orléans cedex 2, France}
\address{Vincent Lafforgue fait partie du projet  ANR-14-CE25-0012 SINGSTAR.\newline
CNRS et Institut Fourier, UMR 5582, Université Grenoble Alpes \newline 100 rue des Maths, 38610 Gières, France}
\begin{document}

\maketitle

\begin{abstract}
Nous démontrons que tout groupe hyperbolique admet une action affine isométrique propre sur un quotient d'un espace de Banach $\ell^p$, pour tout $p>1$ suffisamment proche de 1.
\end{abstract}


\section{Point de départ}

Dans un travail précédent \cite{alvarez-lafforgue}, nous donnions une nouvelle démonstration élémentaire et auto-contenue d'un théorème de Yu : tout groupe hyperbolique admet une action affine isométrique propre sur un espace $\ell^p$ pour $p$ suffisamment grand \cite{yu}.
Nous renvoyons le lecteur à l'introduction de \cite{alvarez-lafforgue} pour une présentation du contexte mathématique et des principaux résultats autour des actions affines isométriques propres, et au premier paragraphe pour ce qui concerne les définitions ; nous rappelons ici la méthode, bien connue par ailleurs, que nous utilisons pour construire des actions affines.

\vspace{0.2cm}

\noindent
\textbf{Une méthode utile pour construire des actions affines.}
Soit $(\mathcal{E}^{\circ},\| \cdot \|)$ un espace de Banach et $\pi$ une action continue isométrique d'un groupe topologique $G$ sur~$\mathcal{E}^{\circ}$.
Supposons donné un espace vectoriel $V$, un plongement d'espace vectoriel de $\mathcal{E}^{\circ}$ dans~$V$ et une action linéaire de $G$ sur $V$ qui stabilise $\mathcal{E}^{\circ}$ et dont la restriction à $\mathcal{E}^{\circ}$ est~$\pi$.
Si $\xi_{\circ}$ est un élément de $V$ tel que, pour tout $g$ de $G$, le vecteur $\xi_{\circ} - g \cdot \xi_{\circ}$ appartient au sous-espace $\mathcal{E}^{\circ}$ et $g \longmapsto \xi_{\circ} - g \cdot \xi_{\circ}$ est continue de $G$ dans $\mathcal{E}^{\circ}$, alors l'action linéaire de $G$ sur $V$ se restreint en une action continue affine isométrique de $G$ sur l'espace de Banach affine $\mathcal{E} = \xi_{\circ} + \mathcal{E}^{\circ}$.
Cette dernière est aussi donnée par le cocycle $g \longmapsto \xi_{\circ} - g \cdot \xi_{\circ}$.
Par définition, l'action est propre si et seulement si $\lim_{g \to \infty} \| \xi_{\circ} - g \cdot \xi_{\circ} \| = \infty$.

\vspace{0.2cm}

Comme exemple immédiat d'application de cette méthode, nous en déduisons dans \cite{alvarez-lafforgue} que tout groupe discret $G$ de type fini admet une action affine isométrique propre sur l'espace de Banach affine $d(1,\cdot)+\ell^\infty(G)$, le cocycle étant dans ce cas donné par la fonction $c(g) = d(1,\cdot) - d(g,\cdot)$, où $d$ désigne la métrique des mots une fois fixé un système fini de générateurs.

\begin{prop}\label{point-depart}
Tout groupe discret $G$ de type fini admet une action affine isométrique propre sur un quotient d'un espace de Banach $\ell^1$ d'une réunion finie de copies de $G$.
\end{prop}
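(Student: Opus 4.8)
The plan is to realise a proper affine isometric action of $G$ not on $\ell^\infty(G)$ itself, but on the space of $\ell^1$-chains of its Cayley graph taken modulo cycles. Fix a finite symmetric generating set $S$ and let $d$ be the associated word metric. The oriented edges of the Cayley graph are indexed by $E=G\times S$ (the edge $x\to xs$), so that $\ell^1(E)$ is the $\ell^1$-space of $|S|$ copies of $G$, on which $G$ acts isometrically by $g\cdot\delta_{(x,s)}=\delta_{(gx,s)}$. I would introduce the boundary operator $\partial\colon\ell^1(E)\to\ell^1(G)$ defined by $\partial\delta_{(x,s)}=\delta_{xs}-\delta_x$; it is bounded and $G$-equivariant, so $\ker\partial$ is a closed $G$-invariant subspace, and $Q:=\ell^1(E)/\ker\partial$ is a quotient of an $\ell^1$-space of a finite union of copies of $G$, equipped with the induced isometric $G$-action.

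Next I would construct the cocycle. For $g\in G$, choosing a geodesic word $g=s_1\cdots s_n$ and setting $\gamma_g=\sum_{i=1}^{n}\delta_{(s_1\cdots s_{i-1},\,s_i)}\in\ell^1(E)$, one gets $\partial\gamma_g=\delta_g-\delta_1$ and $\|\gamma_g\|_1=d(1,g)$. Let $b(g)\in Q$ be the class of $\gamma_g$. Since two chains with the same boundary differ by an element of $\ker\partial$, the element $b(g)$ does not depend on the chosen geodesic; and $\partial(\gamma_g+g\cdot\gamma_h)=\delta_{gh}-\delta_1=\partial\gamma_{gh}$ yields the cocycle identity $b(gh)=b(g)+g\cdot b(h)$. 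Hence $b$ is a $1$-cocycle for the isometric $G$-representation on $Q$, and, continuity being automatic since $G$ is discrete, it defines an affine isometric action of $G$ on $Q$ by $g\cdot v=\rho(g)v+b(g)$, in the spirit of the method recalled in the introduction.

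It remains to prove properness, i.e.\ that $\|b(g)\|_Q\to\infty$. One inequality is free: $\|b(g)\|_Q=\inf\{\|\gamma\|_1:\gamma\in\ell^1(E),\ \partial\gamma=\delta_g-\delta_1\}\le\|\gamma_g\|_1=d(1,g)$. The one step carrying genuine content is the reverse bound — an isoperimetric statement to the effect that $\delta_g-\delta_1$ cannot be filled by an $\ell^1$-chain more cheaply than along a geodesic — which I would obtain by pairing with Lipschitz functions. For $\gamma\in\ell^1(E)$ with $\partial\gamma=\delta_g-\delta_1$ and any bounded $1$-Lipschitz function $\phi\colon G\to\R$, writing $\phi'(x,s):=\phi(xs)-\phi(x)$ one has, by absolute convergence, $\phi(g)-\phi(1)=\langle\phi,\partial\gamma\rangle=\langle\phi',\gamma\rangle$, hence $|\phi(g)-\phi(1)|\le\|\phi'\|_\infty\,\|\gamma\|_1\le\|\gamma\|_1$. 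Applying this to the truncations $\phi=\min(d(1,\cdot),N)$ and letting $N\to\infty$ gives $d(1,g)\le\|\gamma\|_1$ for every such $\gamma$, so that $\|b(g)\|_Q=d(1,g)$, which tends to infinity as $g\to\infty$. The affine action of $G$ on $Q$ is therefore proper. The only real obstacle is thus spotting this duality computation; the remaining verifications are routine bookkeeping about the Cayley graph.
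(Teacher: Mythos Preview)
Your proof is correct and follows the same route as the paper: both act on the quotient $\ell^1(G\times S)/\ker\partial$ (equivalently, on $\partial(\ell^1(\mathcal G^1))$ with the image norm) via the cocycle given by the class of a chain with boundary $\delta_g-\delta_1$, and identify its norm with $d(1,g)$. The only difference is that you justify the lower bound $\|b(g)\|_Q\ge d(1,g)$ explicitly by pairing with bounded $1$-Lipschitz functions, whereas the paper simply asserts that the image norm of $\delta_y-\delta_x$ ``se calcule facilement'' as the graph distance.
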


\begin{proof}
Commençons par rappeler que si $\mathcal{G} = (\mathcal{G}^0,\mathcal{G}^1)$ est un graphe orienté connexe, où $\mathcal{G}^0$ et $\mathcal{G}^1$ désignent les ensembles de sommets et d'arêtes, on a un opérateur {\it bord}
\[\partial : \ell^1(\mathcal{G}^1) \longrightarrow \mathcal{F}({\mathcal{G}^0}),\]
\noindent
où $\mathcal{F}({\mathcal{G}^0})$ désigne l'espace vectoriel des fonctions réelles sur les sommets.
Précisons ce point.
Par définition, $\ell^1(\mathcal{G}^1)$ est la complétion pour la norme $||\cdot||_1$ de l'espace vectoriel $\mathcal{F}_f({\mathcal{G}^1})$ des fonctions à support fini sur les arêtes de $\mathcal{G}$.
L'image par l'opérateur bord d'une arête $x \rightarrow y$ est par définition la 0-chaîne $\delta_y - \delta_x$ qui est une fonction (à support fini) de somme nulle sur les sommets de $\mathcal{G}$.
Par linéarité, $\partial$ s'étend en une application linéaire de $\mathcal{F}_f({\mathcal{G}^1})$ dans l'espace vectoriel $\mathcal{F}^{0}_f({\mathcal{G}^0})$ des fonctions à support fini et de somme nulle sur les sommets.
De plus, cette application est surjective : étant donné deux sommets~$x$ et~$y$ de $\mathcal{G}$, la fonction $\delta_y - \delta_x$ est l'image de n'importe quel chemin d'arêtes reliant~$x$ à $y$.
On peut ainsi définir une norme image sur $\mathcal{F}^{0}_f({\mathcal{G}^0})$ qui se calcule facilement sur une fonction de la forme $\delta_y - \delta_x$ : par définition, c'est l'infimum des normes $||\cdot||_1$ des fonctions $c$ de $\mathcal{F}_f({\mathcal{G}^1})$ telles que $\partial(c) = \delta_y - \delta_x$.
Il n'est pas difficile de voir que pour calculer cet infimum il suffit de considérer les fonctions caractéristiques des chemins d'arêtes reliant~$x$ à $y$ (pour une démonstration complète de ce point, on pourra se reporter au lemme~\ref{chaine-somme-chemins-lacets}), et qu'on obtient ainsi la distance dans le graphe entre~$x$ et~$y$.
Par complétion des espaces de fonctions à support fini, on en déduit finalement que l'opérateur bord s'étend en une application linéaire bornée surjective de $\ell^1(\mathcal{G}^1)$ dans la complétion pour la norme image de $||\cdot||_1$ de $\mathcal{F}^{0}_f({\mathcal{G}^0})$.

Dans le cas où $\mathcal{G}$ est le graphe de Cayley de $G$ associé à un système fini de générateurs, l'action naturelle de $G$ sur $\mathcal{G}^0$ induit une action isométrique $\pi$ de~$G$ sur l'espace de Banach $\mathcal{E}^{\circ}$ obtenu par complétion pour la norme image de $||\cdot||_1$ de l'espace vectoriel $\mathcal{F}^{0}_f({\mathcal{G}^0})$ des fonctions à support fini et de somme nulle sur les sommets.
L'action affine de $G$ sur l'espace affine des fonctions à support fini et de somme~1 sur les sommets induit alors par cette complétion une action affine isométrique de partie linéaire $(\pi,\mathcal{E}^{\circ})$.

Nous allons maintenant expliciter cette action affine isométrique de $G$ dans le cadre de la méthode rappelée ci-dessus pour construire des actions affines.
En prenant $\xi_0 = \delta_1$ la masse de Dirac en l'élément neutre de $G$, $\mathcal{E}^0 = \partial(\ell^1(\mathcal{G}^1))$ comme espace de Banach et $V=\mathcal{F}({\mathcal{G}^0})$ comme espace vectoriel, on retrouve l'action affine isométrique ci-dessus en tant qu'action affine isométrique de $G$ sur l'espace de Banach affine $\delta_1 + \partial(\ell^1(\mathcal{G}^1))$ donnée par le cocycle $g \longmapsto \delta_1 - \delta_g$.
La norme de celui-ci n'est autre que la distance de $g$ à l'élément neutre du groupe, donc l'action est propre.
Il ne reste plus qu'à noter que le $G$-espace des arêtes $\mathcal{G}^1$ s'identifie à une réunion finie de copies de $G$.
En effet, pour tout entier naturel $R$, l'application $(x,y) \longmapsto (x,x^{-1}y)$ de $X^{\leq R}  = \{(x,y)\in G \times G ; d(x,y)\leq R\}$ dans $G \times B(1,R)$ est une bijection $G$-équivariante, où l'action de $G$ sur $G \times B(1,R)$ est par translation à gauche sur le premier facteur\footnote{On désigne par $B(1,R)$ (respectivement $S(1,R)$) la boule (resp. la sphère) de centre~1 et de rayon~$R$.}.
En particulier, $\mathcal{G}^1$ s'identifie à $G \times S(1,1)$.
\end{proof}

Nous verrons dans la suite de cet article comment étendre la proposition~\ref{point-depart} à des quotients d'espaces $\ell^p$ dès lors que $p$ est suffisamment proche de 1 dans le cas des groupes hyperboliques.
Notons dès à présent qu'il est nécessaire de considérer des quotients des espaces $\ell^p$, et non pas les espaces $\ell^p$ eux-mêmes, pour des $p$ proches de 1.
En effet, d'après Nowak \cite{nowak09}, admettre une action affine isométrique propre sur un espace $\ell^p$ pour $1<p<2$ est une caractérisation de la propriété de Haagerup, et l'on sait bien que certains groupes hyperboliques ont la propriété (T) de Kazhdan (par exemple les réseaux de $\text{Sp}(n,1)$ \cite{gromov}).



\section{Énoncé et démonstration du théorème principal}

Nous rappelons la définition d'une classe d'espaces hyperboliques d'un type particulier (contenant les graphes de Cayley des groupes hyperboliques) et nous renvoyons le lecteur au paragraphe~3 de \cite{alvarez-lafforgue} pour les définitions et exemples de base à propos des espaces hyperboliques.

\begin{defi}
Un bon espace hyperbolique discret est un espace hyperbolique, uniformément localement fini dont la métrique provient d'une structure de graphe.
\end{defi}

Notons que puisque la métrique $d : X \longrightarrow \N$ provient d'une structure de graphe, un bon espace hyperbolique discret $(X,d)$ est {\it géodésique} dans le sens que, pour tous $x,y$ de $X$ et tout entier $k$ de $[\![0,\cdots,d(a,b)]\!]$, il existe un élément $z$ de $X$ tel que $d(x,z)=k$ et $d(z,y)=d(x,y)-k$.
Par ailleurs, $d$ étant géodésique, l'uniforme locale finitude équivaut au fait que le nombre de points à distance $1$ d'un élément donné de $X$ est borné indépendamment de ce point.

\vspace{0.1cm}

\noindent
{\bf Exemple fondamental} : si $\Gamma$ est un groupe hyperbolique et $d$ la distance invariante à gauche associée à la longueur des mots pour un système fini de générateurs donné, alors $(\Gamma,d)$ est un bon espace hyperbolique discret ; en outre ce dernier est muni d'une action isométrique de $\Gamma$ par translation à gauche ($d$ provient de la structure de graphe de Cayley sur $\Gamma$ associée au système de générateurs donné).

\vspace{0.1cm}

\noindent
{\it Notations.}
Si $\eta$ est un réel positif et $x,y$ sont deux points de l'espace métrique $X$, on désigne par $\eta\text{-}\text{géod}(x,y)$ l'ensemble des points $z$ de $X$ tels que
$$d(x,z)+d(z,y) \leq d(x,y)+\eta.$$
On note $\geod(x,y)$ au lieu de $0\text{-}\geod(x,y)$ l'ensemble des points $z$ de $X$ tels que $d(x,z)+d(z,y)=d(x,y)$.
Pour tout entier naturel $n$, un chemin de $x$ à $y$ de longueur~$n$ est la donnée d'une suite finie de points $(x_i)_{0 \leq i \leq n}$ de~$X$ telle que $x_0=x$, $x_n=y$ et $d(x_i,x_{i+1})=1$ pour tout~$i$ dans $[\![0,n-1]\!]$ ; si de plus $x_0 = x_n$, nous appellerons un tel chemin un \textit{lacet}.
Enfin, convenons d'appeler \textit{arêtes orientées} les éléments de $X^1  = \{(x,y)\in X \times X ; d(x,y)=1\}$ ; on désigne par $e^{-}$ le sommet origine $x$ d'une telle arête $e=(x,y)$ de $X^1$, par $e^{+}$ son sommet terminal $y$ et par $e^{\text{op}}=(y,x)$ l'arête opposée.

\vspace{0.2cm}

Nous énonçons à présent le théorème principal de cet article.

\begin{thm}\label{thm-principal}
Soit $(X,d)$ un bon espace hyperbolique discret muni d'une action continue isométrique propre d'un groupe topologique $G$.
Il existe un réel $p_0$ strictement supérieur à 1 tel que, pour tout réel $p$ appartenant à $[1,p_0[$, $G$ admet une action continue affine isométrique propre sur un quotient de l'espace de Banach $\ell^p(X^1)$.
\end{thm}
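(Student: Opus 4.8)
The plan is to apply verbatim the construction method recalled in Section~1, taking for $\mathcal{E}^{\circ}$ a suitable quotient of $\ell^p(X^1)$, in perfect analogy with the proof of Proposition~\ref{point-depart} but with the exponent $1$ replaced by $p$. Fix a base point $x_0\in X$ and keep $V=\mathcal{F}(X^0)$, the space of all real functions on the vertices, with its natural linear $G$-action. Uniform local finitude makes the boundary operator restrict to a bounded $G$-equivariant operator $\partial\colon\ell^p(X^1)\to\ell^p(X^0)$, and one sets $\mathcal{E}^{\circ}=\partial\bigl(\ell^p(X^1)\bigr)\simeq \ell^p(X^1)/\ker\partial$, endowed with the image norm and viewed inside $V$; this is a Banach space carrying an isometric $G$-action, and a quotient of $\ell^p(X^1)$. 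With $\xi_{\circ}=\delta_{x_0}$, the vector $\xi_{\circ}-g\cdot\xi_{\circ}=\delta_{x_0}-\delta_{gx_0}$ is the boundary of any finitely supported chain from $gx_0$ to $x_0$ (for instance a geodesic path), hence lies in $\mathcal{E}^{\circ}$, and $g\mapsto\delta_{x_0}-\delta_{gx_0}$ is locally constant, hence continuous, since $G$ acts continuously on the discrete space $X$. The method of Section~1 then produces a continuous affine isometric action of $G$ on the affine Banach space $\delta_{x_0}+\mathcal{E}^{\circ}$ with cocycle $g\mapsto\delta_{x_0}-\delta_{gx_0}$.

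All of the above works for every $p\in[1,\infty[$; the real content of the theorem is that this action becomes \emph{proper} once $p$ is close enough to $1$. By definition of the image norm,
\[
\|\delta_{x_0}-\delta_{gx_0}\|_{\mathcal{E}^{\circ}}=\inf\bigl\{\|\gamma\|_{\ell^p(X^1)} : \gamma\in\ell^p(X^1),\ \partial\gamma=\delta_{x_0}-\delta_{gx_0}\bigr\},
\]
the minimal $\ell^p$-norm of a flow from $x_0$ to $gx_0$. Since $G$ acts properly on $X$, $g\to\infty$ forces $d(x_0,gx_0)\to\infty$, so it suffices to prove that, in any good discrete hyperbolic space, this infimum tends to infinity with $d(x,y)$.

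For the lower bound I would argue by duality. If $\partial\gamma=\delta_x-\delta_y$ then for every $\psi\colon X^0\to\R$ one has $\langle\gamma,d\psi\rangle=\psi(x)-\psi(y)$, where $d\psi(e)=\psi(e^{+})-\psi(e^{-})$; hence $\|\gamma\|_{\ell^p}\ge |\psi(x)-\psi(y)|\,\|d\psi\|_{\ell^{p'}}^{-1}$ for the conjugate exponent $p'$. The natural test function is $\psi=\psi_{x,y}=d(x,\cdot)-d(y,\cdot)$, for which $|\psi_{x,y}(x)-\psi_{x,y}(y)|=2d(x,y)$ and $|d\psi_{x,y}(e)|\le 2$ for all $e$. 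The crux is a purely geometric estimate on the \emph{support} of $d\psi_{x,y}$, namely on the set of edges that ``separate $x$ from $y$'': $\delta$-hyperbolicity should confine this set to a neighbourhood of controlled radius of any geodesic between $x$ and $y$, so that, using uniform local finitude, its cardinality is at most $C(X)\,d(x,y)^{A}$ for a constant $A$ depending only on the hyperbolicity and local-finitude constants. Granting this, $\|d\psi_{x,y}\|_{\ell^{p'}}\le 2\bigl(C(X)\,d(x,y)^{A}\bigr)^{1/p'}$, whence
\[
\|\delta_x-\delta_y\|_{\mathcal{E}^{\circ}}\ge C(X)^{-1/p'}\,d(x,y)^{\,1-A/p'},
\]
and the exponent $1-A/p'$ is strictly positive as soon as $p'>A$, i.e. $p<p_0:=A/(A-1)$, which is $>1$. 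This gives properness for every $p\in[1,p_0[$.

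The main obstacle is exactly this geometric lemma: bounding the number of edges separating $x$ from $y$ by a fixed power of $d(x,y)$. In an arbitrary graph this set can be exponentially large (think of amenable geometries), and hyperbolicity must be used essentially — one wants that an edge lying far from every geodesic $[x,y]$ is subject to a Busemann-type cancellation in $\psi_{x,y}$, hence does not lie in the support of $d\psi_{x,y}$. The remaining points are routine: $\ker\partial$ is closed (kernel of a bounded operator) and $G$-stable, the quotient is nontrivial, and the identification of $\delta_{x_0}-\delta_{gx_0}$ with the class of a finitely supported chain is legitimate. Finally, the restriction to $p$ near $1$ should be understood as genuine and not an artefact to be removed in this framework: by Nowak, an affine isometric action on $\ell^p$ itself for $1<p<2$ would force the Haagerup property, which fails for some hyperbolic groups, so passing to a quotient of $\ell^p$ is indispensable.
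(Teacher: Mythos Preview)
Your setup --- the action on $\delta_{x_0}+\partial(\ell^p(X^1))$, the cocycle $g\mapsto\delta_{x_0}-\delta_{gx_0}$, and the reduction of properness to a lower bound on the quotient norm via H\"older against a test function on edges --- is correct and is exactly the skeleton of the paper's proof. The gap is the geometric lemma you isolate, and it is not repairable for this test function: the support of $d\psi_{x,y}$ is in general \emph{infinite}, so $d\psi_{x,y}\notin\ell^{p'}(X^1)$ for any finite $p'$ and your duality inequality is vacuous. For instance, in a surface group with its standard (bipartite) Cayley graph and adjacent $x,y$, the function $\psi_{x,y}$ takes only the values $\pm1$, and $\mathrm{supp}(d\psi_{x,y})$ is precisely the edge boundary between the two infinite half-spaces $\{\psi_{x,y}=1\}$ and $\{\psi_{x,y}=-1\}$; since surface groups are one-ended, this edge boundary is infinite. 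More softly: a bounded-neighbourhood bound would give linear cardinality and hence properness for \emph{every} $p$, in particular on a quotient of $\ell^2$ --- a Hilbert space --- contradicting property~(T) for some hyperbolic groups. Your ``Busemann cancellation'' heuristic only holds up to an $O(\delta)$ error, which for an integer-valued function says nothing.

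The paper keeps the H\"older step but replaces your $\{0,\pm1,\pm2\}$-valued $d\psi_{x,y}$ by the exponentially decaying weight $w(e)=e^{-\epsilon\,d(e,\geod(x,y))}$. This weight \emph{is} in $\ell^{q}$ once $q$ is large enough to beat the exponential volume growth of $X$ (this competition is exactly what produces the threshold $p_0$), with $\|w\|_q\le C\,d(x,y)^{1/q}$. The price is that one must now \emph{prove} the weighted lower bound $\sum_{e}|c(e)|\,w(e)\ge\beta\,d(x,y)$ for every chain $c$ with $\partial c=\delta_y-\delta_x$; this is the real content. The paper first decomposes an arbitrary chain into paths and loops, and then shows, via a visual-metric argument, that for any path from $x$ to $y$ and any $t\in\geod(x,y)$ one has $\sum_i e^{-\epsilon\,d(x_i,t)}\ge\alpha>0$: the quantities $e^{-\epsilon(x_i,x_{i+1})_t}$ are, up to multiplicative constants, the increments of a genuine metric centred at $t$ for which $x$ and $y$ are uniformly far apart, so the triangle inequality along the path forces the sum to be bounded below. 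Summing over a net of such points $t$ along $\geod(x,y)$ produces the factor $d(x,y)$. This visual-metric step is where hyperbolicity is genuinely used and is what is missing from your proposal.
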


Puisqu'un groupe hyperbolique est en particulier un bon espace hyperbolique discret dont l'espace des arêtes s'identifie à une réunion finie de copies du groupe, le corollaire suivant est une conséquence immédiate du théorème~\ref{thm-principal}.

\begin{cor}
Pour tout groupe hyperbolique $\Gamma$, il existe un entier naturel $k$ non nul tel que, en notant $\Gamma_k$ la réunion disjointe de $k$ copies de~$\Gamma$, pour tout $p$ suffisamment proche de 1, $\Gamma$ admet une action affine isométrique propre sur un quotient de l'espace de Banach $\ell^p(\Gamma_k)$.
\end{cor}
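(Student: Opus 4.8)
Le plan est de déduire le corollaire du théorème~\ref{thm-principal}, appliqué au graphe de Cayley de $\Gamma$ ; à ce stade il n'y a pas d'obstacle réel, toute la substance étant dans l'énoncé du théorème.

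On fixe une partie génératrice finie $S$ de $\Gamma$ et l'on munit $\Gamma$ de la métrique des mots $d$ associée. D'après l'exemple fondamental rappelé après la définition d'un bon espace hyperbolique discret, $(\Gamma,d)$ en est un : la métrique provient du graphe de Cayley, celui-ci est uniformément localement fini (chaque sommet a exactement $|S|$ voisins) et $\Gamma$ est hyperbolique par hypothèse. De plus $\Gamma$ agit sur $(\Gamma,d)$ par translations à gauche, isométriquement ; comme $\Gamma$ est discret, cette action est continue, et elle est propre puisque, pour tout réel $R$, l'ensemble $\{g\in\Gamma : d(1,g\cdot 1)\leq R\}$ coïncide avec la boule $B(1,R)$, qui est finie. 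Le théorème~\ref{thm-principal} fournit alors un réel $p_0=p_0(\Gamma,S)$ strictement supérieur à $1$ tel que, pour tout $p$ dans $[1,p_0[$, $\Gamma$ admette une action continue affine isométrique propre sur un quotient de l'espace de Banach $\ell^p(\Gamma^1)$, où $\Gamma^1=\{(x,y)\in\Gamma\times\Gamma : d(x,y)=1\}$ désigne l'ensemble des arêtes orientées du graphe de Cayley.

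Il ne reste qu'à reconnaître $\ell^p(\Gamma^1)$. Comme à la fin de la démonstration de la proposition~\ref{point-depart}, l'application $(x,y)\longmapsto(x,x^{-1}y)$ est une bijection $\Gamma$-équivariante de $\Gamma^1$ sur $\Gamma\times S(1,1)$, pour l'action de $\Gamma$ par translation à gauche sur le premier facteur, où $S(1,1)$ est l'ensemble fini des générateurs de longueur~$1$ (que l'on peut supposer non vide, le cas du groupe trivial étant évident). En posant $k=|S(1,1)|$ et en notant $\Gamma_k$ la réunion disjointe de $k$ copies de $\Gamma$, on obtient un isomorphisme isométrique $\Gamma$-équivariant $\ell^p(\Gamma^1)\simeq\ell^p(\Gamma\times S(1,1))\simeq\ell^p(\Gamma_k)$. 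Pour tout $p$ dans $[1,p_0[$, l'action construite ci-dessus est donc une action affine isométrique propre sur un quotient de $\ell^p(\Gamma_k)$, ce qui établit le corollaire. Les seuls points à vérifier dans cette réduction sont élémentaires — finitude locale uniforme, propreté de l'action, décomposition combinatoire de $\Gamma^1$ — et la véritable difficulté (choix du quotient de $\ell^p(\Gamma^1)$, preuve de sa propreté, et raison pour laquelle $p_0>1$) est entièrement supportée par la démonstration du théorème~\ref{thm-principal}, où intervient l'hyperbolicité de $\Gamma$.
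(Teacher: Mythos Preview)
Your argument is correct and follows exactly the route indicated in the paper: apply Théorème~\ref{thm-principal} to the Cayley graph of $\Gamma$ (a bon espace hyperbolique discret on which $\Gamma$ acts properly by isometries), then use the bijection $(x,y)\mapsto(x,x^{-1}y)$ to identify $\Gamma^1$ with $\Gamma\times S(1,1)$ and hence $\ell^p(\Gamma^1)$ with $\ell^p(\Gamma_k)$ for $k=|S(1,1)|$. There is nothing to add.
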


La suite de cet article est consacrée à la démonstration du théorème~\ref{thm-principal} dont la stratégie de preuve reprend celle de la proposition~\ref{point-depart} : le cœur du problème est de démontrer qu'un certain cocycle, analogue au cocycle introduit dans la proposition~\ref{point-depart}, est propre, ce qui n'est pas vrai en général pour $p>1$ sans hypothèse sur l'espace métrique $(X,d)$.
Nous allons utiliser de manière cruciale l'hypothèse d'hyperbolicité qui implique qu'un chemin entre deux points $x$ et $y$ de $(X,d)$ reste proche de toute géodésique entre ces deux points.
Une formulation quantitative de cette heuristique est donnée par la proposition~\ref{prop-principale} dont on voit bien qu'elle tombe en défaut pour un espace métrique comme $(\mathbf{Z}^2,d_{\text{eucl.}})$.

\begin{proof}
Le groupe $G$ opère naturellement dans l'espace vectoriel $\mathcal{F}_f(X^1)$ des fonctions réelles à support fini sur $X^1  = \{(x,y)\in X \times X ; d(x,y)=1\}$, ainsi que dans l'espace vectoriel $\mathcal{F}_f^0(X)$ des fonctions réelles à support fini et de somme nulle sur $X$.
Par ailleurs l'application linéaire \textit{bord} $\partial : \mathcal{F}_f(X^1) \longrightarrow \mathcal{F}_f^0(X)$, définie par $\partial \delta_{(x,y)}=\delta_y-\delta_x$ est surjective et $G$-équivariante.
Pour $p$ dans $]1;\infty[$, on munit l'espace vectoriel $\mathcal{F}_f(X^1)$ des fonctions réelles à support fini sur $X^1$ de la norme $\ell^p$ notée $||\cdot||_p$.
L'action de~$G$ étant isométrique, après complétion des espaces de fonctions ci-dessus, on en déduit une action continue affine isométrique de~$G$ sur l'espace de Banach affine obtenu par complétion pour la norme image de $||\cdot||_p$ par $\partial$ de l'espace affine $\mathcal{F}_f^1(X)$ des fonctions à support fini et de somme 1 sur $X$.
Il reste alors à voir que le cocycle défini par cette action est propre, pour tout $p$ suffisamment proche de 1, ce que nous déduirons de la proposition~\ref{prop2}. 
Pour prouver cette dernière, nous démontrerons d'abord la proposition~\ref{prop-principale} et son corollaire~\ref{cor-prop-principale}.

\vspace{0.3cm}

Soit $(X,d)$ un bon espace hyperbolique discret et $\delta$ une constante d'hyperbolicité de $X$ que l'on supposera strictement positive.
Commençons par démontrer le \textit{lemme d'emboîtement des géodésiques}, lemme bien connu et qui nous sera utile par la suite.

\begin{lem}\label{lemme-emboitement}
Soit $a,b,c,d$ quatre points de~$X$.
Si $b$ appartient à $\eta_1\text{-}\geod(a,c)$, si $c$ appartient à $\eta_2\text{-}\geod(b,d)$ et si $d(b,c)>(\eta_1+\eta_2+\delta)/2$, alors $b$ appartient à $(\eta_1+\delta)\text{-}\text{géod}(a,d)$ et $c$ à $(\eta_2+\delta)\text{-}\text{géod}(a,d)$.
\end{lem}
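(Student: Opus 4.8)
The plan is to use the four-point hyperbolicity inequality directly on the configuration $a,b,c,d$. Recall (Gromov four-point condition, valid since $X$ is $\delta$-hyperbolic) that among the three sums
\[
d(a,b)+d(c,d),\quad d(a,c)+d(b,d),\quad d(a,d)+d(b,c),
\]
the largest two differ by at most $\delta$. First I would record the two hypotheses in additive form:
\[
d(a,b)+d(b,c)\leq d(a,c)+\eta_1,\qquad d(b,c)+d(c,d)\leq d(b,d)+\eta_2.
\]
Adding these gives $d(a,b)+d(c,d)+2\,d(b,c)\leq d(a,c)+d(b,d)+\eta_1+\eta_2$, i.e.
\[
\bigl(d(a,b)+d(c,d)\bigr)+2\,d(b,c)\leq \bigl(d(a,c)+d(b,d)\bigr)+\eta_1+\eta_2.
\]
Since $2\,d(b,c)>\eta_1+\eta_2+\delta$ by hypothesis, this forces $d(a,b)+d(c,d)<d(a,c)+d(b,d)-\delta$, so the sum $d(a,b)+d(c,d)$ is strictly smaller, by more than $\delta$, than $d(a,c)+d(b,d)$. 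Hence $d(a,b)+d(c,d)$ is \emph{not} among the top two of the three sums, which means it is the smallest and the other two, $d(a,c)+d(b,d)$ and $d(a,d)+d(b,c)$, are the top two; therefore
\[
\bigl|\,(d(a,c)+d(b,d))-(d(a,d)+d(b,c))\,\bigr|\leq \delta.
\]

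**Deriving the two conclusions.**

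Now I would combine this last inequality with the two hypotheses. For the statement about $b$: starting from $d(a,b)+d(b,c)\leq d(a,c)+\eta_1$ and adding $d(c,d)$ to both sides,
\[
d(a,b)+d(b,c)+d(c,d)\leq d(a,c)+d(c,d)+\eta_1.
\]
From the four-point inequality $d(a,c)+d(b,d)\leq d(a,d)+d(b,c)+\delta$, combined with the triangle inequality $d(c,d)\le d(b,d)+d(b,c)$... actually the cleaner route: I want to bound $d(a,c)+d(c,d)$ from above by $d(a,d)+\text{(something)}$. Use $d(b,c)+d(c,d)\le d(b,d)+\eta_2$ together with $d(a,c)+d(b,d)\le d(a,d)+d(b,c)+\delta$: from the second, $d(a,c)\le d(a,d)+d(b,c)-d(b,d)+\delta$; adding $d(c,d)$ and using $d(c,d)\le d(b,d)+\eta_2-d(b,c)$ from the first hypothesis,
\[
d(a,c)+d(c,d)\le d(a,d)+d(b,c)-d(b,d)+\delta+d(b,d)+\eta_2-d(b,c)=d(a,d)+\delta+\eta_2.
\]
Hmm, that produces $\eta_2$ where I want $\eta_1$ — I should instead feed this into the $b$-inequality to get $d(a,b)+d(b,c)+d(c,d)\le d(a,d)+\eta_1+\eta_2+\delta$, which is weaker than claimed. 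The correct pairing is: to show $b\in(\eta_1+\delta)\text{-}\geod(a,d)$, i.e. $d(a,b)+d(b,d)\le d(a,d)+\eta_1+\delta$, I add $d(b,d)$ (not $d(c,d)$) — no. Let me restate: the right move is to use $d(a,b)+d(b,c)\le d(a,c)+\eta_1$ and $d(a,c)+d(b,d)\le d(a,d)+d(b,c)+\delta$; adding them, the $d(b,c)$ cancels and $d(a,c)$ cancels, leaving $d(a,b)+d(b,d)\le d(a,d)+\eta_1+\delta$, which is exactly $b\in(\eta_1+\delta)\text{-}\geod(a,d)$. Symmetrically, using $d(b,c)+d(c,d)\le d(b,d)+\eta_2$ and $d(a,c)+d(b,d)\le d(a,d)+d(b,c)+\delta$ and cancelling $d(b,c)$ and $d(b,d)$ gives $d(a,c)+d(c,d)\le d(a,d)+\eta_2+\delta$, i.e. $c\in(\eta_2+\delta)\text{-}\geod(a,d)$.

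**Remarks on the obstacle.**

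The only genuine point requiring care is establishing that $d(a,b)+d(c,d)$ is the \emph{strictly smallest} of the three sums, so that the four-point condition applies to the pair $\{d(a,c)+d(b,d),\,d(a,d)+d(b,c)\}$ in the form $d(a,c)+d(b,d)\le d(a,d)+d(b,c)+\delta$; this is exactly where the hypothesis $d(b,c)>(\eta_1+\eta_2+\delta)/2$ is used, and it is the mild "largeness" assumption that makes the nesting work. Everything else is a two-line combination of triangle-type inequalities, and the final two displays cancel cleanly. I would present the proof in this order: (1) the additive reformulation of the hypotheses, (2) the argument identifying the smallest sum, (3) the application of the four-point inequality, (4) the two one-line cancellations yielding the conclusions for $b$ and $c$.
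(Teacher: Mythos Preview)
Your argument is correct and follows essentially the same route as the paper: both proofs apply the Gromov four-point inequality to $a,b,c,d$ and use the hypothesis $d(b,c)>(\eta_1+\eta_2+\delta)/2$ to eliminate the unwanted alternative. The only cosmetic difference is that the paper works with the $\max$-form of the inequality and argues for $c$ by symmetry (applying the first half to the reversed quadruple $(d,c,b,a)$), whereas you first isolate the single inequality $d(a,c)+d(b,d)\le d(a,d)+d(b,c)+\delta$ and then obtain both conclusions by adding the respective hypotheses---a slightly cleaner bookkeeping, but the same idea.
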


\begin{proof}
Par définition de la $\delta$-hyperbolicité de $X$ (\cite{alvarez-lafforgue}, déf.~3.2), on a
\[d(a,c)+d(b,d) \leq \text{max}(d(a,d)+d(b,c),d(a,b)+d(c,d)) + \delta.\]
Puisque $b$ appartient à $\eta_1\text{-}\geod(a,c)$ ({\it i.e.} $d(a,b)+d(b,c) \leq d(a,c) + \eta_1$), on en déduit donc
\[d(a,b)+d(b,c)+d(b,d) \leq \text{max}(d(a,d)+d(b,c),d(a,b)+d(c,d)) + \eta_1+ \delta,\]
ou encore
\[d(a,b)+d(b,d) \leq \text{max}(d(a,d),d(a,b)+d(c,d)-d(b,c)) + \eta_1+ \delta.\]
En utilisant à présent que $c$ appartient à $\eta_2\text{-}\geod(b,d)$ ({\it i.e.} $d(b,c)+d(c,d) \leq d(b,d) + \eta_2$), on en déduit alors
\[d(a,b)+d(b,d) \leq \text{max}(d(a,d),d(a,b)+d(b,d)-2d(b,c)+\eta_2) + \eta_1+ \delta.\]
Comme $d(b,c)>(\eta_1+\eta_2+\delta)/2$, on en conclut finalement que
\[d(a,b)+d(b,d) \leq d(a,d)+ \eta_1 + \delta.\]
On démontre de même que $c$ appartient à $(\eta_2+\delta)\text{-}\text{géod}(a,d)$ en appliquant la première partie du lemme que nous venons de démontrer au quadruplet $(d,c,b,a)$.
\end{proof}

Étant donné un point $t$ dans $X$, on note $(\cdot,\cdot)_t$ le produit de Gromov par rapport à $t$, fonction définie sur $X \times X$ par
\[(x,y)_t = \frac{1}{2}(d(x,t)+d(y,t)-d(x,y)).\]
Rappelons que, pour tous points~$x$ et~$y$ de~$X$ et pour tout segment géodésique $\gamma_{xy}$ entre ces deux points, la différence $d(t,\gamma_{xy})-(x,y)_t$ entre la distance de $t$ à $\gamma_{xy}$ et le produit de Gromov de $x,y$ par rapport à~$t$ est, en valeur absolue, majorée par une constante ne dépendant que de $\delta$. 
Par ailleurs, par définition de la $\delta$-hyperbolicité de $X$ (\cite{harpehyper}, chap.~2, déf.~3), pour tous points~$x, y, z$ et~$t$ de~$X$, en notant $K = \text{e}^{\delta} \geq 1$, on a
\[\text{e}^{-(x,y)_t} \leq K \cdot \text{max}\left\{\text{e}^{-(x,z)_t},\text{e}^{-(z,y)_t}\right\}.\]
Un argument analogue à la proposition~10 du chapitre~7 de \cite{harpehyper} permet de démontrer qu'il existe un réel~$\epsilon > 0$ et un réel $C>1$ tels que, en tout point $t$ de~$X$, il existe une fonction $d_t$ définie sur $X \times X$ satisfaisant l'inégalité triangulaire et vérifiant, pour tous points~$x$ et~$y$ de~$X$,
\[\frac{1}{C} \cdot \text{e}^{-\epsilon \cdot (x,y)_t} \leq d_t(x,y) \leq C \cdot \text{e}^{-\epsilon \cdot (x,y)_t}.\]
Convenons d'appeler une telle fonction $d_t$ une {\it métrique visuelle} centrée en $t$, de paramètre visuel $\text{e}^{\epsilon}$ et de constante $C$.
Notons que, si $t$ appartient à $\eta\text{-}\text{géod}(x,y)$, alors les deux points~$x$ et~$y$ sont à une distance visuelle $d_t(x,y)$ l'un de l'autre au moins égale à~$\text{e}^{-\epsilon \cdot \eta/2}/C$.
Remarquons enfin que, pour tout $x$ de $X$, $d_t(x,x) > 0$, ce qui fait que $d_t$ n'est pas une métrique au sens strict du terme.

\begin{lem}\label{lemme-cle}
Soit $(X,d)$ un bon espace hyperbolique discret.
Il existe deux réels~$\epsilon$ et~$\alpha_\eta$ strictement positifs tels que pour tout chemin $(x_i)_{0 \leq i \leq n}$ de longueur $n \geq 1$, si $t$ appartient à $\eta\text{-}\text{géod}(x_0,x_n)$, alors
\[\sum_{i=0}^{n-1} \text{e}^{-\epsilon \cdot d(x_i, t)} \geq \alpha_\eta.\]
\end{lem}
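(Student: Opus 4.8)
\emph{Plan de preuve.} On prendra pour $\epsilon$ le paramètre visuel $\epsilon$ introduit ci-dessus, avec la constante $C>1$ associée (toutes deux uniformes en $t$), et l'on posera $\alpha_\eta=e^{-\epsilon}C^{-2}e^{-\epsilon\eta/2}$. L'idée n'est pas de localiser un unique sommet du chemin près de $t$ — argument qui, \textit{via} le fait qu'un point d'une géodésique $[x_0,x_n]$ est à distance $O(\log n)$ du chemin, ne donnerait qu'une borne tendant vers $0$ avec $n$ — mais de recoller les contributions de toutes les arêtes au moyen de l'inégalité triangulaire pour la métrique visuelle $d_t$ centrée en $t$.

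\textit{Première étape} : pour chaque $i\in[\![0,n-1]\!]$, on compare $d(x_i,t)$ au produit de Gromov $(x_i,x_{i+1})_t$. Comme $d(x_i,x_{i+1})=1$, on a $2(x_i,x_{i+1})_t=d(x_i,t)+d(x_{i+1},t)-1$, et l'inégalité triangulaire $d(x_{i+1},t)\geq d(x_i,t)-1$ donne $(x_i,x_{i+1})_t\geq d(x_i,t)-1$, d'où
\[ e^{-\epsilon d(x_i,t)}\;\geq\; e^{-\epsilon}\,e^{-\epsilon(x_i,x_{i+1})_t}\;\geq\;\frac{e^{-\epsilon}}{C}\,d_t(x_i,x_{i+1}), \]
la dernière inégalité provenant de $d_t(x,y)\leq C\,e^{-\epsilon(x,y)_t}$. \textit{Deuxième étape} : on somme sur $i$ et on applique successivement l'inégalité triangulaire pour $d_t$ puis la minoration $d_t(x,y)\geq\frac1C e^{-\epsilon(x,y)_t}$ :
\[ \sum_{i=0}^{n-1}e^{-\epsilon d(x_i,t)}\;\geq\;\frac{e^{-\epsilon}}{C}\sum_{i=0}^{n-1}d_t(x_i,x_{i+1})\;\geq\;\frac{e^{-\epsilon}}{C}\,d_t(x_0,x_n)\;\geq\;\frac{e^{-\epsilon}}{C^2}\,e^{-\epsilon(x_0,x_n)_t}. \]
\textit{Troisième étape} : on traduit l'hypothèse $t\in\eta\text{-}\text{géod}(x_0,x_n)$, c'est-à-dire $d(x_0,t)+d(t,x_n)\leq d(x_0,x_n)+\eta$, en l'inégalité $(x_0,x_n)_t\leq\eta/2$, de sorte que $e^{-\epsilon(x_0,x_n)_t}\geq e^{-\epsilon\eta/2}$ ; on obtient alors la minoration cherchée avec $\alpha_\eta=e^{-\epsilon}C^{-2}e^{-\epsilon\eta/2}>0$. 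Le cas $n=1$ est couvert sans modification par cette chaîne d'inégalités.

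Le point véritablement décisif est l'emploi de l'inégalité triangulaire de $d_t$ à l'avant-dernière ligne : c'est elle qui « court-circuite » le chemin tout entier par une seule arête virtuelle de $x_0$ à $x_n$ et rend la borne indépendante de la longueur $n$ ; les deux autres ingrédients — la comparaison $d(x_i,t)\leftrightarrow(x_i,x_{i+1})_t$ et la relecture de $\eta\text{-}\text{géod}$ en termes de produit de Gromov — sont élémentaires. L'unique précaution à prendre est de respecter l'appariement entre les sommets $x_0,\dots,x_{n-1}$ qui figurent dans la somme et les arêtes $(x_i,x_{i+1})$ du chemin, ce qui impose de minorer $d(x_i,t)$ (et non $d(x_{i+1},t)$) à la première étape.
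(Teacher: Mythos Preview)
Ta démonstration est correcte et reprend exactement l'approche du papier : même métrique visuelle $d_t$, même comparaison $(x_i,x_{i+1})_t\geq d(x_i,t)-1$ tirée de $d(x_i,x_{i+1})=1$, même enchaînement \og majoration de $d_t$ -- inégalité triangulaire -- minoration de $d_t$ \fg, et même traduction de $t\in\eta\text{-}\geod(x_0,x_n)$ en $(x_0,x_n)_t\leq\eta/2$, aboutissant à la même constante $\alpha_\eta=e^{-\epsilon(\eta/2+1)}/C^2$. La seule différence est l'ordre de présentation, le papier regroupant ces trois inégalités en une seule ligne numérotée avant d'invoquer l'encadrement du produit de Gromov.
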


\begin{proof}
Soit $(x_i)_{0 \leq i \leq n}$ un chemin de longueur $n$, $t$ un point de $\eta\text{-}\text{géod}(x_0,x_n)$ et $d_t$ une métrique visuelle sur $X$, centrée en $t$, de paramètre visuel $\text{e}^{\epsilon}$ et de constante~$C$.
On a alors les inégalités suivantes :
\begin{equation}\label{inegalite-1}
C \cdot \sum_{i=0}^{n-1} \text{e}^{-\epsilon \cdot (x_i,x_{i+1})_t} \geq \sum_{i=0}^{n-1} d_t(x_i,x_{i+1}) \geq d_t(x_0,x_n) \geq \text{e}^{-\epsilon \cdot \eta/2}/C.
\end{equation}
En effet, la première inégalité provient de la majoration satisfaite par la métrique visuelle $d_t$ et la deuxième inégalité est l'inégalité triangulaire pour cette même métrique visuelle.
Quant à la dernière inégalité, elle résulte du fait que $t$ appartient à $\eta\text{-}\text{géod}(x_0,x_n)$.

Par ailleurs, on déduit facilement du produit de Gromov et du fait que $x_i$ et $x_{i+1}$ sont deux points voisins dans $X$ l'encadrement suivant :
\begin{equation}\label{inegalite-2}
d(x_i,t)-1 \leq (x_i,x_{i+1})_t \leq d(x_i,t).
\end{equation}
En utilisant les inégalités \ref{inegalite-1} et \ref{inegalite-2} ci-dessus, on en déduit finalement l'inégalité annoncée avec $\alpha_\eta = \text{e}^{-\epsilon (\eta/2+1)}/C^2.$
\end{proof}

\begin{prop}\label{prop-principale}
Soit $(X,d)$ un bon espace hyperbolique discret.
Il existe deux réels~$\epsilon$ et~$\beta$ strictement positifs tels que pour tous points $x$ et~$y$ dans $X$ et tout chemin $(x_i)_{0 \leq i \leq n}$ tel que $x_0=x$ et $x_n=y$, on a
\[\sum_{i=0}^{n-1} \text{e}^{-\epsilon \cdot d(x_i, \text{géod}(x,y))} \geq \beta \cdot d(x,y).\]
\end{prop}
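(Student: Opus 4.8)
Le plan est de déduire l'inégalité du lemme~\ref{lemme-cle} par un argument de sommation le long d'une géodésique joignant $x$ à $y$, l'exposant $\epsilon$ étant celui fourni par ce lemme. On peut supposer $x\neq y$, l'inégalité étant triviale sinon (membre de droite nul, somme vide ou à termes positifs) ; posons $D=d(x,y)\geq 1$, ce qui force $n\geq 1$ puisque $x_0=x\neq y=x_n$. Comme $(X,d)$ est géodésique, je fixerais une géodésique $(v_j)_{0\leq j\leq D}$ de $x$ à $y$, de sorte que $d(v_j,v_{j'})=|j-j'|$. Chaque $v_j$ vérifie $d(x,v_j)+d(v_j,y)=D$, donc appartient à $\geod(x,y)$ ; par suite $d(x_i,\geod(x,y))\leq r_i:=\min_{0\leq j\leq D}d(x_i,v_j)$ pour tout $i$, et il suffit de prouver $\sum_{i=0}^{n-1}\text{e}^{-\epsilon r_i}\geq \beta\, D$.

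Ensuite, j'appliquerais le lemme~\ref{lemme-cle} au chemin $(x_i)_{0\leq i\leq n}$ et au point $t=v_j$, qui appartient à $0\text{-}\geod(x_0,x_n)$, pour chaque $j\in\{0,\dots,D\}$ : il vient $\sum_{i=0}^{n-1}\text{e}^{-\epsilon d(x_i,v_j)}\geq \alpha_0>0$. En sommant ces $D+1$ inégalités puis en intervertissant les deux sommations finies, on obtient
\[\alpha_0\, D\ \leq\ \sum_{j=0}^{D}\sum_{i=0}^{n-1}\text{e}^{-\epsilon d(x_i,v_j)}\ =\ \sum_{i=0}^{n-1}\Big(\sum_{j=0}^{D}\text{e}^{-\epsilon d(x_i,v_j)}\Big).\]

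Le point central, et celui que j'attends comme principal obstacle, est la majoration, pour chaque $i$ fixé, de la somme intérieure par $A\,\text{e}^{-\epsilon r_i}$ avec $A<\infty$ ne dépendant que de $\delta$ et $\epsilon$ : en la reportant ci-dessus on conclura avec $\beta=\alpha_0/A$ (quantité bien définie et strictement positive). Pour l'obtenir, j'établirais d'abord que les sommets de la géodésique proches de $x_i$ restent confinés au voisinage de la projection de $x_i$ : si $d(x_i,v_j)\leq r_i+k$ et $d(x_i,v_{j'})\leq r_i+k$, alors, en appliquant au segment géodésique $(v_j,\dots,v_{j'})$ le fait rappelé plus haut — la distance de $x_i$ à un segment géodésique diffère du produit de Gromov correspondant d'au plus une constante $\kappa=\kappa(\delta)$ — et en minorant la distance de $x_i$ à $\{v_j,\dots,v_{j'}\}$ par $r_i$ et en la majorant par $(v_j,v_{j'})_{x_i}+\kappa$, on obtient
\[r_i\ \leq\ (v_j,v_{j'})_{x_i}+\kappa\ =\ \tfrac12\big(d(x_i,v_j)+d(x_i,v_{j'})-|j-j'|\big)+\kappa\ \leq\ r_i+k-\tfrac12|j-j'|+\kappa,\]
d'où $|j-j'|\leq 2k+2\kappa$. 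Ainsi les indices $j$ tels que $d(x_i,v_j)\leq r_i+k$ sont contenus dans un intervalle de longueur au plus $2k+2\kappa$ ; comme $d$ est à valeurs entières et $r_i=\min_j d(x_i,v_j)$, il y a donc pour chaque entier $k\geq 0$ au plus $2k+2\kappa+1$ indices $j$ avec $d(x_i,v_j)=r_i+k$, et en regroupant les termes de la somme intérieure selon l'entier $d(x_i,v_j)-r_i\geq 0$ on aboutit à
\[\sum_{j=0}^{D}\text{e}^{-\epsilon d(x_i,v_j)}\ \leq\ \text{e}^{-\epsilon r_i}\sum_{k\geq 0}(2k+2\kappa+1)\,\text{e}^{-\epsilon k}\ =:\ A\,\text{e}^{-\epsilon r_i},\]
la série convergeant puisque $\epsilon>0$, et $A$ ne dépendant que de $\epsilon$ et $\kappa$, donc de $\delta$.

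Toute la difficulté se concentre donc dans ce lemme de confinement : c'est une forme quantitative du caractère contractant des projections sur les géodésiques en géométrie hyperbolique. Comme on ne dispose ici que de l'inégalité des quatre points (et non de triangles fins), il me paraît plus commode de le faire passer par l'estimation rappelée reliant la distance à un segment géodésique et le produit de Gromov, appliquée aux sous-géodésiques $(v_j,\dots,v_{j'})$ plutôt qu'à la géodésique entière ; le reste n'est qu'une sommation géométrique.
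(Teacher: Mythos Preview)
Your argument is correct and takes a genuinely different route from the paper's.

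Both proofs rest on the key lemma~\ref{lemme-cle}, but they organise the summation in opposite ways. The paper extracts a coarse subsequence $(x_{i_k})$ of the given path whose nearest-point projections $t_k$ onto $\geod(x,y)$ advance by steps of controlled size $\delta_1\leq d(t_k,t_{k+1})\leq\delta_2$; three applications of the embedding lemma (lemme~\ref{lemme-emboitement}) then show that the midpoint $m_k$ of $\{t_k,t_{k+1}\}$ lies in $\Delta''\text{-}\geod(x_{i_k},x_{i_{k+1}})$ for a uniform $\Delta''$, so lemma~\ref{lemme-cle} applied to each sub-path contributes at least $\alpha$, and summing the $l\geq d(x,y)/\delta_2$ pieces gives the result. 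You instead apply lemma~\ref{lemme-cle} once for \emph{each} vertex $v_j$ of a fixed geodesic (with $\eta=0$), sum over~$j$, and then swap the order of summation; the work is transferred to the ``confinement'' estimate $\sum_{j}\text{e}^{-\epsilon d(x_i,v_j)}\leq A\,\text{e}^{-\epsilon r_i}$, which you derive cleanly from the comparison $|d(t,\gamma_{ab})-(a,b)_t|\leq\kappa$ already recalled in the paper.

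What each approach buys: the paper's decomposition is closer in spirit to the later use of the proposition (splitting chains into paths) and makes explicit how the hyperbolicity forces every path to ``visit'' the geodesic regularly, at the cost of some bookkeeping with the embedding lemma. Your double-counting argument is shorter and bypasses lemma~\ref{lemme-emboitement} entirely; the only geometric input beyond lemma~\ref{lemme-cle} is the quasi-contracting property of the nearest-point projection, which you obtain with a single application of the Gromov-product/distance comparison to sub-segments of the chosen geodesic. One minor point worth making explicit in a final write-up: your $r_i$ is the distance to the \emph{chosen} geodesic $(v_j)$, which a priori can exceed $d(x_i,\geod(x,y))$; you handle this correctly by noting $d(x_i,\geod(x,y))\leq r_i$ so that the desired inequality follows from the one with $r_i$.
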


Si $X$ est un arbre, la proposition précédente est facilement vérifiée quel que soit le choix de~$\epsilon$ strictement positif avec $\beta = 1$, puisque tout chemin de~$x$ à $y$ contient le segment $\text{géod}(x,y)$.
Par contre, on vérifie sans peine que cette proposition est fausse dans le cas du plan euclidien $(\mathbf{Z}^2,d_{\text{eucl.}})$.
En effet, en notant $d$ la distance entre deux points $x$ et $y$ de $\mathbf{Z}^2$ situés sur une même horizontale et en considérant un chemin \og rectangulaire \fg\ de longueur $n=2m+d$ entre $x$ et $y$ comme indiqué sur la figure ci-dessous, on calcule facilement la distance d'un point de ce chemin au segment horizontal inférieur qui est le segment géodésique reliant $x$ et $y$.

\begin{center}
\definecolor{qqqqff}{rgb}{0.,0.,1.}
\begin{tikzpicture}[line cap=round,line join=round,>=triangle 45,x=1.0cm,y=1.0cm,scale=0.5]
\clip(3.,1.) rectangle (8.,7.);
\draw [line width=2.pt] (4.,2.)-- (7.,2.);
\draw (4.,2.)-- (4.,6.);
\draw (4.,6.)-- (7.,6.);
\draw (7.,6.)-- (7.,2.);
\draw (7.,6.)-- (4.,6.);
\draw (3.9,2.85) node[anchor=north west] {$x$};
\draw (6.2,2.9) node[anchor=north west] {$y$};
\draw (5.1,6.9) node[anchor=north west] {$d$};
\draw (3,4.5) node[anchor=north west] {$m$};
\draw (6.9,4.5) node[anchor=north west] {$m$};
\begin{scriptsize}
\draw [fill=qqqqff] (4.,2.) circle (3.5pt);
\draw [fill=qqqqff] (7.,2.) circle (3.5pt);
\draw [fill=qqqqff] (4.,6.) circle (3.5pt);
\draw [fill=qqqqff] (7.,6.) circle (3.5pt);
\end{scriptsize}
\end{tikzpicture}
\end{center}

Les $d$ points du segment horizontal supérieur sont tous à distance $m$ du segment géodésique entre $x$ et $y$, et les points des segments verticaux sont à une distance de~$x$ ou~$y$ variant de $0$ à $m-1$.
On en déduit donc
\[\sum_{i=0}^{n-1} \text{e}^{-\epsilon \cdot d_{\text{eucl.}}(x_i, \text{géod}(x,y))}
= \sum_{k=0}^{m-1} \text{e}^{-\epsilon \cdot k} + d \cdot \text{e}^{-\epsilon \cdot m} + \sum_{k=1}^{m} \text{e}^{-\epsilon \cdot k}
\leq \frac{2}{1-\text{e}^{-\epsilon}} + d \cdot \text{e}^{-\epsilon \cdot m},\]

\noindent
ce qui interdit à la somme considérée dans la proposition~\ref{prop-principale} d'être minorée par $\beta \cdot d$ pour tous $d$ et $m$, dès lors que $\beta$ et $\epsilon$ sont strictement positifs.

\begin{proof}
Soit $x$ et $y$ deux points de $X$ et $(x_i)_{0 \leq i \leq n}$ un chemin de $x$ à $y$.


\begin{center}
\definecolor{qqffqq}{rgb}{0.,1.,0.}
\definecolor{qqqqff}{rgb}{0.,0.,1.}
\definecolor{qqffff}{rgb}{0.,1.,1.}
\begin{tikzpicture}[line cap=round,line join=round,>=triangle 45,x=1.0cm,y=1.0cm,scale=0.8]
\clip(1.8,0.7) rectangle (9.5,7.5);
\draw (3.84,2.02) node[anchor=north west] {$x$};
\draw (6.94,2.02) node[anchor=north west] {$y$};
\draw [rotate around={0.:(5.5,2.)}] (5.5,2.) ellipse (1.8251407699364384cm and 1.0397782600555683cm);
\draw (3.9,6.98) node[anchor=north west] {$x_{i_k}$};
\draw (6.28,7.2) node[anchor=north west] {$x_{i_{k+1}}$};
\draw (4.6,1.62) node[anchor=north west] {$\text{géod}(x,y)$};
\draw (4.82,2.28) node[anchor=north west] {$t_k$};
\draw (5.36,2.28) node[anchor=north west] {$t_{k+1}$};
\draw [line width=3.6pt,dash pattern=on 1pt off 1pt] (4.6,6.74)-- (5.28,2.56);
\draw [line width=3.6pt,dash pattern=on 1pt off 1pt] (5.28,2.56)-- (6.24,7.24);
\draw (4.5,3) node[anchor=north west] {$m_k$};
\draw (5.8,4.9) node[anchor=north west] {$\text{géod}(x_{i_k},x_{i_{k+1}})$};
\begin{scriptsize}
\draw [fill=qqffff] (4.,2.) circle (2.5pt);
\draw [fill=qqffff] (7.,2.) circle (2.5pt);
\draw [fill=qqqqff] (3.,3.) circle (2.5pt);
\draw [fill=qqqqff] (6.24,7.24) circle (2.5pt);
\draw [fill=qqqqff] (2.54,4.44) circle (2.5pt);
\draw [fill=qqqqff] (3.2,5.8) circle (2.5pt);
\draw [fill=qqqqff] (4.6,6.74) circle (2.5pt);
\draw [fill=qqqqff] (7.82,6.72) circle (2.5pt);
\draw [fill=qqqqff] (8.98,5.76) circle (2.5pt);
\draw [fill=qqqqff] (8.94,4.5) circle (2.5pt);
\draw [fill=qqqqff] (8.42,2.94) circle (2.5pt);
\draw [fill=qqqqff] (2.18,5.32) circle (1.0pt);
\draw [fill=qqqqff] (2.36,5.48) circle (1.0pt);
\draw [fill=qqqqff] (2.56,5.64) circle (1.0pt);
\draw [fill=qqqqff] (2.78,5.76) circle (1.0pt);
\draw [fill=qqqqff] (2.18,5.06) circle (1.0pt);
\draw [fill=qqqqff] (2.2,4.84) circle (1.0pt);
\draw [fill=qqqqff] (2.24,4.62) circle (1.0pt);
\draw [fill=qqqqff] (2.68,4.14) circle (1.0pt);
\draw [fill=qqqqff] (2.74,3.92) circle (1.0pt);
\draw [fill=qqqqff] (2.82,3.64) circle (1.0pt);
\draw [fill=qqqqff] (2.76,3.28) circle (1.0pt);
\draw [fill=qqqqff] (3.08,2.62) circle (1.0pt);
\draw [fill=qqqqff] (3.34,2.62) circle (1.0pt);
\draw [fill=qqqqff] (3.5,2.44) circle (1.0pt);
\draw [fill=qqqqff] (3.6,2.24) circle (1.0pt);
\draw [fill=qqqqff] (3.68,2.02) circle (1.0pt);
\draw [fill=qqqqff] (3.36,6.06) circle (1.0pt);
\draw [fill=qqqqff] (3.38,6.32) circle (1.0pt);
\draw [fill=qqqqff] (3.46,6.58) circle (1.0pt);
\draw [fill=qqqqff] (3.64,6.78) circle (1.0pt);
\draw [fill=qqqqff] (4.,7.) circle (1.0pt);
\draw [fill=qqqqff] (4.32,7.) circle (1.0pt);
\draw [fill=qqqqff] (4.84,7.06) circle (1.0pt);
\draw [fill=qqqqff] (5.16,7.04) circle (1.0pt);
\draw [fill=qqqqff] (5.46,7.1) circle (1.0pt);
\draw [fill=qqqqff] (5.76,7.22) circle (1.0pt);
\draw [fill=qqqqff] (6.62,7.16) circle (1.0pt);
\draw [fill=qqqqff] (6.8,7.16) circle (1.0pt);
\draw [fill=qqqqff] (7.,7.) circle (1.0pt);
\draw [fill=qqqqff] (7.26,6.9) circle (1.0pt);
\draw [fill=qqqqff] (7.52,6.78) circle (1.0pt);
\draw [fill=qqqqff] (8.06,6.52) circle (1.0pt);
\draw [fill=qqqqff] (8.26,6.4) circle (1.0pt);
\draw [fill=qqqqff] (8.44,6.22) circle (1.0pt);
\draw [fill=qqqqff] (8.58,6.04) circle (1.0pt);
\draw [fill=qqqqff] (8.68,5.88) circle (1.0pt);
\draw [fill=qqqqff] (9.1,5.44) circle (1.0pt);
\draw [fill=qqqqff] (9.2,5.26) circle (1.0pt);
\draw [fill=qqqqff] (9.2,5.04) circle (1.0pt);
\draw [fill=qqqqff] (9.18,4.84) circle (1.0pt);
\draw [fill=qqqqff] (9.16,4.62) circle (1.0pt);
\draw [fill=qqqqff] (8.96,4.16) circle (1.0pt);
\draw [fill=qqqqff] (8.76,4.08) circle (1.0pt);
\draw [fill=qqqqff] (8.68,3.9) circle (1.0pt);
\draw [fill=qqqqff] (8.64,3.7) circle (1.0pt);
\draw [fill=qqqqff] (8.5,3.54) circle (1.0pt);
\draw [fill=qqqqff] (8.58,3.32) circle (1.0pt);
\draw [fill=qqqqff] (8.24,2.68) circle (1.0pt);
\draw [fill=qqqqff] (8.04,2.54) circle (1.0pt);
\draw [fill=qqqqff] (7.8,2.4) circle (1.0pt);
\draw [fill=qqqqff] (7.56,2.32) circle (1.0pt);
\draw [fill=qqqqff] (7.34,2.16) circle (1.0pt);
\draw [fill=qqffff] (4.46,2.18) circle (2.5pt);
\draw [fill=qqffff] (5.,2.28) circle (2.5pt);
\draw [fill=qqffff] (5.54,2.3) circle (2.5pt);
\draw [fill=qqffff] (6.12,2.22) circle (2.5pt);
\draw [fill=qqffff] (6.58,2.1) circle (2.5pt);
\draw [fill=qqffqq] (5.28,2.56) circle (2.5pt);
\end{scriptsize}
\end{tikzpicture}
\end{center}

Pour tout $i$ de $[[0,n-1]]$, la distance entre tout point de $\text{géod}(x,y)$ parmi les plus proches de $x_i$ et tout point de $\text{géod}(x,y)$ parmi les plus proches de $x_{i+1}$ est inférieure à $\kappa\cdot\delta$, où $\kappa$ est une constante ne dépendant que de $(X,\delta)$.
Par conséquent il est possible de fixer deux constantes $\delta_1$ et $\delta_2$ (ne dépendant que de $\delta$) vérifiant $\delta_2 \geq \delta_1 > 3\delta+1$ telles que si la distance entre $x$ et $y$ est supérieure à $\delta_1$, on peut extraire une sous-suite $(x_{i_k})_{0 \leq k \leq l}$ du chemin $(x_i)_{0 \leq i \leq n}$ et une suite de points $(t_k)_{0 \leq k \leq l}$ de $\text{géod}(x,y)$ telles que $x_{i_0}=t_0=x$, $x_{i_{l}}=t_{l}=y$, et pour tout $k$ dans $[[0,l-1]]$ :
\begin{itemize}
\renewcommand{\labelitemi}{$\bullet$}
\item $t_k$ est un point de $\text{géod}(x,y)$ parmi les plus proches de $x_{i_k}$ (autrement dit, $d(x_{i_k},t) \geq d(x_{i_k},t_k)$ pour tout point $t$ de $\text{géod}(x,y)$) ;
\item la distance entre $t_k$ et $t_{k+1}$ est contrôlée par $\delta_1$ et $\delta_2$ : $\delta_1 \leq d(t_k,t_{k+1}) \leq \delta_2$.
\end{itemize}
Si la distance entre~$x$ et $y$ est strictement inférieure à $\delta_1$, la proposition résulte du lemme~\ref{lemme-cle}.
Sinon on applique l'argument suivant.

Soit $m_k$ un milieu\footnote{Convenons de désigner par milieu d'une paire de points $\{x,y\}$ de $X$ un point $z$ de $\geod(x,y)$ tel que $| d(x,z)-d(y,z) | \leq 1.$} de la paire de points $\{t_k,t_{k+1}\}$ pour laquelle on note $D=d(t_k,t_{k+1})$.
Une première application du lemme~\ref{lemme-emboitement} d'emboîtement des géodésiques au quadruplet de points $(x_{i_k},t_k,m_k,t_{k+1})$ (avec $a=x_{i_k}, b=t_k, c=m_k, d=t_{k+1}, \eta_1=d(t_k,m_k), \eta_2=0$) permet d'affirmer que $t_k$ appartient à $\Delta\text{-}\geod(x_{i_k},t_{k+1})$ avec $\Delta=(D+1)/2+\delta$, puisque
\[d(t_k,m_k) > \frac{\eta_1+\eta_2+\delta}{2} = d(t_k,m_k)/2+\delta/2,\]
ce qui suit de $\delta_1>\delta$.
De manière symétrique, une deuxième application du même lemme au quadruplet $(t_k,m_k,t_{k+1},x_{i_{k+1}})$ assure cette fois-ci que $t_{k+1}$ appartient à $\Delta\text{-}\geod(t_k,x_{i_{k+1}})$.

Une troisième et dernière application du lemme~\ref{lemme-emboitement} au quadruplet $(x_{i_k},t_k,t_{k+1},x_{i_{k+1}})$ permet de déduire que $t_k$ et $t_{k+1}$ appartiennent à $\Delta'\text{-}\geod(x_{i_k},x_{i_{k+1}})$ avec $\Delta'=\Delta+\delta$, puisque
\[d(t_k,t_{k+1}) = D > \frac{2\Delta+\delta}{2}=D/2+(3\delta+1)/2,\]
ce qui suit de $\delta_1>3\delta+1$.

On en déduit finalement que $m_k$ appartient à $\Delta''\text{-}\geod(x_{i_k},x_{i_{k+1}})$ avec $\Delta''=\Delta'+D+1=3(D+1)/2+2\delta$.
En effet,
\[d(x_{i_k},m_k)+d(m_k,x_{i_{k+1}}) \leq d(x_{i_k},t_k)+d(t_k,x_{i_{k+1}})+2d(t_k,m_k) \leq d(x_{i_k},x_{i_{k+1}})+\Delta''.\]

Or d'après le lemme~\ref{lemme-cle} (avec $x=x_{i_k}, y=x_{i_{k+1}}, t=m_k, \eta=\Delta''$), il existe deux réels~$\epsilon$ et~$\alpha$ strictement positifs tels que, pour tout $k$,
\[\sum_{i=i_k}^{i_{k+1}-1} \text{e}^{-\epsilon \cdot d(x_i, \text{géod}(x,y))} \geq
\sum_{i=i_k}^{i_{k+1}-1} \text{e}^{-\epsilon \cdot d(x_i, m_k)} \geq \alpha.\]
On en déduit finalement l'inégalité annoncée en posant $\beta = \alpha/{\delta_2}$ puisqu'alors
\[\sum_{i=0}^{n-1} \text{e}^{-\epsilon \cdot d(x_i, \text{géod}(x,y))}
= \sum_{k=0}^{l-1} \sum_{i=i_k}^{i_{k+1}-1} \text{e}^{-\epsilon \cdot d(x_i, \text{géod}(x,y))}
\geq \alpha \cdot d(x,y)/{\delta_2}.\]
\end{proof}

On appelle \textit{chaîne} toute combinaison linéaire (finie) $c$ d'arêtes orientées, avec la relation que l'arête opposée $e^{\text{op}}$ d'une arête orientée $e$ est égale à $-e$.
Autrement dit, une chaîne est un élément du quotient de l'espace vectoriel libre engendré par les arêtes orientées modulo la relation $e^{\text{op}}+e=0$ pour toute arête orientée $e$ ; on note $\overline{e}$ l'image dans le quotient d'une telle arête orientée $e$.
On note $c(e)$ la différence des coefficients de $\overline{e}$ et $\overline{e^{\text{op}}}$ dans $c$, de sorte que $c \mapsto c(e)$ est la forme linéaire sur ce quotient qui envoie $\overline{e}$ sur 1, $\overline{e^{\text{op}}}$ sur -1, et les autres arêtes orientées sur 0.
On remarque que $c(e) \cdot \overline{e}$ ne dépend pas de l'orientation de $e$ (car $c(e)$ et $\overline{e}$ changent tous les deux de signe quand on remplace $e$ par $e^{\text{op}}$).
On a alors $c = \sum_{e/\sim} c(e) \cdot \overline{e}$, où la somme porte sur les arêtes modulo l'orientation.
Enfin, nous dirons qu'une arête $e$ est dans le support de $c$ si $c(e)$ est non nul.

\begin{cor}\label{cor-prop-principale}
Pour toute chaîne $c = \sum_{e/\sim} c(e) \cdot \overline{e}$ de bord $\delta_y - \delta_x$, on a
\[\sum_{e/\sim} |c(e)| \text{e}^{-\epsilon \cdot d(e, \text{géod}(x,y))} \geq \beta \cdot d(x,y),\]
où $d(e, \text{géod}(x,y)) = \text{min}(d(e^-, \text{géod}(x,y)),d(e^+, \text{géod}(x,y)))$.
\end{cor}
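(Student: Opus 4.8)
\emph{Esquisse de démonstration.} Le plan est de déduire le corollaire de la proposition~\ref{prop-principale} au moyen d'un argument de décomposition en flots. On peut supposer $x\neq y$, faute de quoi le membre de droite est nul et il n'y a rien à démontrer. Pour chaque arête $e$ considérée modulo l'orientation, on choisirait alors l'orientation de $e$ de sorte que $c(e)\geq 0$ ; on obtient ainsi une fonction $f$ à support fini, à valeurs positives, définie sur l'ensemble des arêtes orientées ainsi sélectionnées (une par classe $\{e,e^{\mathrm{op}}\}$), et l'on a $\partial f=\delta_y-\delta_x$. Autrement dit, $f$ est un flot de valeur $1$ de $x$ vers $y$ dans le graphe $X$, et pour toute arête $e$ on a $|c(e)|=f(e)$.

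Ensuite, j'appliquerais le théorème de décomposition des flots. Le support de $f$ étant fini, on peut écrire $f=\sum_{j}\lambda_j\,\mathbf{1}_{P_j}+\sum_{k}\mu_k\,\mathbf{1}_{C_k}$, où les $\lambda_j$ et $\mu_k$ sont des réels strictement positifs, où les $P_j$ sont des chemins de $x$ à $y$ et les $C_k$ des lacets, où $\sum_j\lambda_j=1$ (la valeur du flot), et où, pour toute arête orientée $e$, $f(e)\geq\sum_{j\,:\,e\in P_j}\lambda_j$. On obtient cette décomposition par l'algorithme glouton habituel : on extrait un chemin dans le support de $f$, on retranche à $f$ le minimum de ses valeurs le long de ce chemin (ce qui annule au moins une arête), puis l'on itère ; le support étant fini, le procédé s'arrête, après avoir épuisé la valeur $1$ au moyen de chemins puis la circulation résiduelle au moyen de lacets.

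Puis, pour chaque chemin $P_j=(x^{(j)}_i)_{0\leq i\leq n_j}$, on remarquerait que pour toute arête $e$ on a
\[
\text{e}^{-\epsilon\cdot d(e,\text{géod}(x,y))}=\max\!\big(\text{e}^{-\epsilon\cdot d(e^-,\text{géod}(x,y))},\,\text{e}^{-\epsilon\cdot d(e^+,\text{géod}(x,y))}\big)\geq \text{e}^{-\epsilon\cdot d(e^-,\text{géod}(x,y))},
\]
de sorte que la proposition~\ref{prop-principale}, appliquée au chemin $(x^{(j)}_i)$, donne $\sum_{e\in P_j}\text{e}^{-\epsilon\cdot d(e,\text{géod}(x,y))}\geq\sum_{i=0}^{n_j-1}\text{e}^{-\epsilon\cdot d(x^{(j)}_i,\text{géod}(x,y))}\geq\beta\cdot d(x,y)$. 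En reportant cette minoration ainsi que l'inégalité $f(e)\geq\sum_{j\,:\,e\in P_j}\lambda_j$, on conclurait :
\[
\sum_{e/\sim}|c(e)|\,\text{e}^{-\epsilon\cdot d(e,\text{géod}(x,y))}=\sum_{e}f(e)\,\text{e}^{-\epsilon\cdot d(e,\text{géod}(x,y))}\geq\sum_{j}\lambda_j\sum_{e\in P_j}\text{e}^{-\epsilon\cdot d(e,\text{géod}(x,y))}\geq\Big(\sum_j\lambda_j\Big)\beta\cdot d(x,y)=\beta\cdot d(x,y).
\]

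Le seul point qui demande un peu de soin est que $c$ a des coefficients réels quelconques, et non entiers : on ne peut donc pas simplement dire que le support de $c$ \og contient un chemin \fg{} de $x$ à $y$, mais il faut utiliser la forme quantitative ci-dessus de la décomposition en flots (avec le contrôle $f(e)\geq\sum_{j:e\in P_j}\lambda_j$ et $\sum_j\lambda_j=1$), qui permet de pondérer correctement les chemins $P_j$. Ce point ne recèle cependant aucune difficulté sérieuse, toute la substance géométrique étant déjà contenue dans la proposition~\ref{prop-principale}.
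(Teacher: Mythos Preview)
Your proposal is correct and follows essentially the same strategy as the paper. The paper isolates your flow decomposition as a separate lemma (lemme~\ref{chaine-somme-chemins-lacets}), proved by the same greedy extraction you sketch, and then deduces the corollary from it together with proposition~\ref{prop-principale} exactly as you do; the only cosmetic difference is that the paper phrases the decomposition in terms of chains with the $\ell^1$-norm identity $\|c\|_1=\sum_k|\alpha_k|\,\|c_k\|_1+\|\ell\|_1$ rather than in the language of flows.
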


Pour démontrer le corollaire~\ref{cor-prop-principale}, nous aurons besoin du lemme suivant.
Nous dirons qu'une chaîne est un \textit{cycle} si son bord est nul.

\begin{lem}\label{chaine-somme-chemins-lacets}
Pour toute chaîne $c = \sum_{e/\sim} c(e) \cdot \overline{e}$ de bord $\delta_y - \delta_x$, il existe des coefficients réels $\alpha_k$ non nuls, des chemins $c_k$ de bord $\delta_y - \delta_x$ et un cycle $\ell = \sum_j \beta_j \cdot l_j$, où les $\beta_j$ sont des coefficients strictement positifs et les $l_j$ des lacets, tels que $c = \sum_k \alpha_k \cdot c_k + \ell$, $\sum_k \alpha_k = 1$ et
\[ || c ||_1 = \sum_{e/\sim} | c(e) | = \sum_k |\alpha_k| \cdot ||c_k||_1 + || \ell ||_1.\]
\end{lem}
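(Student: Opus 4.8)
Le plan est de produire la décomposition voulue par un procédé de type décomposition de flots (à la Ford--Fulkerson), en veillant à ce qu'elle soit \emph{conforme}, c'est-à-dire qu'aucune arête n'y soit jamais parcourue en sens inverse de celui fixé par le signe de $c(e)$ : c'est cette propriété qui assurera le passage de l'identité sur les coefficients à l'additivité des normes $\ell^1$. On peut supposer $x\neq y$, le cas $x=y$ étant trivial (prendre pour unique chemin $c_1$ le chemin de longueur nulle en $x$, de bord nul et de norme nulle, avec $\alpha_1=1$, et $\ell=c$, décomposé en lacets comme ci-dessous). On oriente chaque arête $e$ du support de $c$ de sorte que $c(e)>0$ : on obtient ainsi un graphe orienté fini $D$ (le support de $c$), muni des poids $w(e)=c(e)>0$, et la condition $\partial c=\delta_y-\delta_x$ signifie qu'en chaque sommet $v\notin\{x,y\}$ le flot entrant égale le flot sortant, tandis qu'en $y$ le premier excède le second de $1$, et en $x$ lui est inférieur de $1$. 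On remarquera que tout chemin orienté de $x$ à $y$ dans $D$, et tout cycle orienté dans $D$, s'écrit comme une somme $\sum_i\overline{(v_i,v_{i+1})}$ dont chaque terme est le représentant d'orientation fixée d'une arête de $D$.

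\emph{Extraction des cycles.} Tant que $D$ contient un cycle orienté, on en choisit un simple $\gamma=(v_0\to\cdots\to v_m=v_0)$, on pose $\beta=\min_i w(v_i\to v_{i+1})>0$, on retient le lacet $(v_0,\dots,v_m)$ affecté du coefficient $\beta$, et on remplace $c$ par $c-\beta\sum_i\overline{(v_i,v_{i+1})}$ : le bord est inchangé, les poids restent $\geq 0$, et au moins une arête quitte le support. Le support décroissant strictement, le procédé s'arrête et fournit un cycle $\ell=\sum_j\beta_j\cdot l_j$ (avec $\beta_j>0$ et $l_j$ des lacets) ainsi qu'une chaîne $c^{(1)}=c-\ell$, de même bord $\delta_y-\delta_x$, à poids $\geq 0$, dont le support est sans cycle orienté.

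\emph{Extraction des chemins.} On démontre, par récurrence sur le nombre d'arêtes du support, l'énoncé suivant : toute chaîne $c'$ à poids $\geq 0$, de support sans cycle orienté et de bord $\lambda(\delta_y-\delta_x)$ avec $\lambda>0$, s'écrit $c'=\sum_k\alpha_k\cdot c_k$ où les $c_k$ sont des chemins orientés de $x$ à $y$ dans le support, $\alpha_k>0$, $\sum_k\alpha_k=\lambda$, et $c'(e)=\sum_{k\,:\,c_k(e)=1}\alpha_k$ pour toute arête $e$ (dans les orientations fixées). Le support est non vide (sinon $\lambda(\delta_y-\delta_x)=0$, absurde) ; le bilan en $x$ fournit une arête sortant de $x$, et en la prolongeant par des arêtes sortantes on ne repasse par aucun sommet (absence de cycle orienté) et, par conservation, on ne peut s'arrêter qu'en $y$ : on obtient un chemin orienté simple $P=(x=u_0\to\cdots\to u_r=y)$. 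On pose $\alpha=\min\bigl(\lambda,\ \min_i w(u_i\to u_{i+1})\bigr)>0$ et l'on retranche $\alpha\sum_i\overline{(u_i,u_{i+1})}$ à $c'$. Si $\alpha=\lambda$, la chaîne obtenue est de bord nul, à poids $\geq 0$ et de support sans cycle orienté, donc nulle — car une chaîne non nulle de bord nul possède un cycle orienté dans son support (on suit des arêtes, la conservation en fournit toujours une suivante, et la finitude force un sommet répété) —, d'où $c'=\alpha\cdot P$ ; sinon $\alpha$ est l'un des $w(u_i\to u_{i+1})$, une arête quitte le support et l'on applique l'hypothèse de récurrence à $\frac{1}{\lambda-\alpha}\bigl(c'-\alpha\sum_i\overline{(u_i,u_{i+1})}\bigr)$. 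Appliqué à $c^{(1)}$ avec $\lambda=1$, ceci donne $c^{(1)}=\sum_k\alpha_k\cdot c_k$ avec $\sum_k\alpha_k=1$.

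\emph{Conclusion.} On obtient alors $c=\sum_k\alpha_k\cdot c_k+\ell$ avec $\sum_k\alpha_k=1$. Dans les orientations fixées, $|c(e)|=w(e)$ est la somme — à termes positifs — des $\alpha_k$ pour les chemins $c_k$ contenant $e$ et des $\beta_j$ pour les lacets $l_j$ contenant $e$ ; en sommant sur les arêtes (chacune figurant au plus une fois, avec coefficient $1$, dans chaque chemin ou lacet) on obtient
\[ ||c||_1 = \sum_k\alpha_k\cdot||c_k||_1 + \sum_j\beta_j\cdot||l_j||_1 = \sum_k|\alpha_k|\cdot||c_k||_1 + ||\ell||_1. \]
La principale difficulté sera de maintenir la conformité de la décomposition à chaque étape (ne jamais emprunter une arête à rebours du signe de $c(e)$), puisque c'est elle qui transforme l'égalité terme à terme sur les coefficients en l'additivité des normes $\ell^1$ ; la terminaison, en revanche, ne pose pas de problème, résultant de la décroissance stricte du support jointe à l'inexistence d'une chaîne non nulle de bord nul à support sans cycle orienté.
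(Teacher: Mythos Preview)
Your proof is correct and rests on the same flow-decomposition idea as the paper's: orient every edge in the support so that its weight is positive, then peel off oriented paths and cycles one at a time, taking as coefficient the minimum weight along the extracted piece so that the support strictly shrinks and the $\ell^1$-norm splits additively. The only difference is organisational. The paper interleaves the two extractions: at each step it starts a walk from $x$ in $\mathcal S_c$ and records either a path to $y$ or, if the walk self-intersects first, a loop; after a path step it rescales the remainder by $1/(1-\alpha_1)$. You instead strip out all oriented cycles first and then decompose the acyclic residual purely into $x$-to-$y$ paths. Your ordering buys a small bonus: all your $\alpha_k$ come out strictly positive, whereas in the paper's version the first path coefficient $\alpha_1$ may exceed $1$, and the rescaling by the negative factor $1/(1-\alpha_1)$ then flips orientations and ultimately yields some negative $\alpha_k$ (this is why the statement only asks for $\alpha_k\neq 0$). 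Both proofs terminate for the same reason (strict decrease of the support) and both hinge on the conformity you stress --- never traversing an edge against the sign of $c(e)$ --- to convert the coefficient identity into the $\ell^1$-norm identity.
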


\begin{proof}
Soit $c = \sum_{e/\sim} c(e) \cdot \overline{e}$ une chaîne de bord $\delta_y - \delta_x$.
On note $\mathcal{S}_c$ le graphe orienté formé des arêtes $e$ telles que $c(e)$ est strictement positif : c'est un raffinement du support de $c$ tenant compte de l'orientation.

Si c est nulle, il n’y a rien à démontrer.
On suppose donc $c$ non nulle.

Puisque le bord de~$c$ est $\delta_y - \delta_x$, pour tout sommet~$z$ distinct de~$x$ ou~$y$, la somme des coefficients $c(e)$ des arêtes~$e$ de sommet terminal~$z$ dans le graphe~$\mathcal{S}_c$ est égale à la somme des coefficients $c(e)$ des arêtes~$e$ de sommet origine~$z$\footnote{Si on pense à $\mathcal{S}_c$ comme un circuit électrique, on traduit ainsi mathématiquement la loi des n{\oe}uds de Kirchhoff qui assure qu'en tout n{\oe}ud du réseau, la somme des intensités des courants entrants est égale à la somme des intensités des courants sortants.}.
En particulier, si $e$ est une arête de $\mathcal{S}_c$ de sommet terminal $z$ distinct de~$x$ ou~$y$, alors il existe une arête~$e'$ dans $\mathcal{S}_c$ de sommet origine $z$.
Ainsi, partant du sommet~$x$, on construit, arête après arête, un chemin $c_1$ d'origine $x$ et d'extrémité~$y$ dans~$\mathcal{S}_c$, ou bien un lacet $l_1$ si le chemin se recoupe.

Selon le cas, on note $\alpha_1 >0$ (respectivement $\beta_1$) le minimum des $c_1(e)$ pour les arêtes $e$ dans le support de $c_1$ (resp. $l_1$).
On considère alors la chaîne $c' = c - \alpha_1 \cdot c_1$ (resp. $c - \beta_1 \cdot l_1$) de bord $(1-\alpha_1) \cdot (\delta_y - \delta_x)$ (resp. de même bord que $c$) et de norme~$\ell^1$ égale à $||c||_1 - \alpha_1 \cdot ||c_1||_1$ (resp. $||c||_1 - \beta_1 \cdot || l_1 ||_1$).
Par construction, le cardinal du support de $c'$ est strictement inférieur à celui de $c$.
Si le bord de $c'$ est nul, c'est terminé.
Sinon on réitère le raisonnement précédent avec la chaîne $\frac{1}{1-\alpha_1}c'$ (resp. $c'$) au lieu de $c$.
On en déduit le résultat après un nombre fini d'itérations.
\end{proof}

Nous démontrons à présent le corollaire~\ref{cor-prop-principale}.

\begin{proof}
Soit $c = \sum_{e/\sim} c(e) \cdot \overline{e}$ une chaîne de bord $\delta_y - \delta_x$.
D'après le lemme~\ref{chaine-somme-chemins-lacets}, il existe des chemins $c_k$ de bord $\delta_y - \delta_x$, des réels non nuls $\alpha_k$ et un cycle~$\ell$ tels que
\[c= \sum_k \alpha_k \cdot c_k + \ell,\]
\noindent
avec $\sum_k \alpha_k = 1$ et $|| c ||_1 = \sum_k |\alpha_k| \cdot ||c_k||_1 + || \ell ||_1$.
Pour chaque chemin $c_k$ de bord $\delta_y - \delta_x$, en indexant par $e_{k,j}$ les arêtes d'un tel chemin, la proposition~\ref{prop-principale} assure que
\[\sum_j \text{e}^{-\epsilon \cdot d(e_{k,j}^{-}, \text{géod}(x,y))}
\geq \beta \cdot d(x,y).\]
On en déduit donc
\[\sum_{e/\sim} |c(e)| \text{e}^{-\epsilon \cdot d(e, \text{géod}(x,y))}
\geq \sum_k |\alpha_k| \sum_j \text{e}^{-\epsilon \cdot d(e_{k,j}^{-}, \text{géod}(x,y))}
\geq \beta \cdot d(x,y).\]
\end{proof}

\begin{prop}\label{prop2}
Soit $(X,d)$ un bon espace hyperbolique discret.
Pour tout $p$ suffisamment proche de 1, il existe un réel $\alpha'$ strictement positif tel que pour toute chaîne $c = \sum_{e/\sim} c(e) \cdot \overline{e}$ de bord $\delta_y - \delta_x$, on a
\[||c||_p = \left( \sum_{e/\sim} |c(e)|^p \right)^{1/p} \geq \alpha' \cdot d(x,y)^{1/p}.\]
\end{prop}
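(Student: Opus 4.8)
\emph{Esquisse de la démonstration.} Le plan est de combiner le corollaire~\ref{cor-prop-principale} avec l'inégalité de Hölder et l'uniforme locale finitude de $X$. Le cas $p=1$ se traite immédiatement : comme $\text{e}^{-\epsilon \cdot d(e,\text{géod}(x,y))} \leq 1$, le corollaire~\ref{cor-prop-principale} donne directement $||c||_1 = \sum_{e/\sim} |c(e)| \geq \sum_{e/\sim} |c(e)| \text{e}^{-\epsilon \cdot d(e,\text{géod}(x,y))} \geq \beta \cdot d(x,y)$, soit l'inégalité voulue avec $\alpha'=\beta$. Je supposerais donc $1<p<p_0$, la constante $p_0>1$ restant à fixer, et je noterais $q=p/(p-1)$ l'exposant conjugué, $\gamma = \text{géod}(x,y)$ et $L=d(x,y)$, le cas $L=0$ étant trivial de sorte qu'on peut supposer $L\geq 1$. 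L'inégalité de Hölder appliquée aux familles $(|c(e)|)$ et $(\text{e}^{-\epsilon \cdot d(e,\gamma)})$ indexées par les arêtes modulo orientation donne alors
\[\sum_{e/\sim} |c(e)| \text{e}^{-\epsilon \cdot d(e,\gamma)} \leq ||c||_p \cdot \Big( \sum_{e/\sim} \text{e}^{-\epsilon q \cdot d(e,\gamma)} \Big)^{1/q},\]
si bien que, grâce au corollaire~\ref{cor-prop-principale}, il suffit de majorer la somme \og duale \fg{} $S := \sum_{e/\sim} \text{e}^{-\epsilon q \cdot d(e,\gamma)}$ par $C \cdot L$, la constante $C$ ne dépendant que de $(X,\delta)$ et de $p$.

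Pour estimer $S$, j'utiliserais que $X$ est uniformément localement fini : si $D$ majore le nombre de voisins d'un sommet, il existe des constantes $C_0 \geq 1$ et $\mu \geq 1$ ne dépendant que de $X$ telles que $\#B(v,n) \leq C_0 \mu^n$ pour tout sommet $v$ et tout entier $n$. Comme le segment $\gamma$ possède $L+1$ sommets, le nombre de sommets à distance exactement $n$ de $\gamma$ est au plus $(L+1) C_0 \mu^n$, d'où, toute arête $e$ avec $d(e,\gamma)=n$ ayant une extrémité à distance $n$ de $\gamma$, le nombre d'arêtes (modulo orientation) à distance exactement $n$ de $\gamma$ est au plus $D(L+1)C_0\mu^n$. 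Par conséquent
\[S \leq \sum_{n \geq 0} D(L+1)C_0 \mu^n \text{e}^{-\epsilon q n} = D C_0 (L+1) \sum_{n \geq 0} \big(\mu \text{e}^{-\epsilon q}\big)^n.\]
La série géométrique converge exactement lorsque $\mu \text{e}^{-\epsilon q} < 1$, c'est-à-dire $q > \epsilon^{-1} \log \mu$ ; comme $q = p/(p-1)$ tend vers $+\infty$ quand $p \to 1^+$, il existe $p_0>1$ tel que cette condition soit satisfaite pour tout $p$ de $]1,p_0[$, et l'on a alors $S \leq C_1 (L+1) \leq 2 C_1 L$ avec $C_1 = D C_0 / (1 - \mu \text{e}^{-\epsilon q})$.

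En combinant ces estimations on obtiendrait $\beta L \leq ||c||_p \cdot (2 C_1 L)^{1/q}$, donc $||c||_p \geq \beta (2C_1)^{-1/q} L^{1-1/q} = \beta (2C_1)^{-1/q} \cdot d(x,y)^{1/p}$ puisque $1-1/q = 1/p$, ce qui établit la proposition avec $\alpha' = \beta (2C_1)^{-1/q}$. Le seul véritable obstacle est d'assurer la convergence de la série géométrique, ce qui est précisément la condition qui contraint $p$ à rester proche de 1 ; tout le reste relève de la comptabilité sur la croissance linéaire en $L$ du \og tube \fg{} autour du segment géodésique et d'une application directe de l'inégalité de Hölder.
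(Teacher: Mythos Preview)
Your argument is correct and follows essentially the same route as the paper: apply H\"older's inequality to the estimate of corollaire~\ref{cor-prop-principale}, then bound the dual sum $\sum_{e/\sim} \text{e}^{-\epsilon q\, d(e,\gamma)}$ via the uniform local finiteness of $X$ and the linear-in-$L$ size of the tube around the geodesic, the convergence of the resulting geometric series being exactly what forces $q$ large, hence $p$ close to~$1$. One cosmetic point: $\gamma=\text{g\'eod}(x,y)$ is the union of all geodesics from $x$ to $y$ and may have more than $L+1$ vertices, but by $\delta$-hyperbolicity it lies in a $\delta$-neighbourhood of any single geodesic segment, so its cardinality is still $O(L)$ and your counting goes through with an adjusted constant.
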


\begin{proof}
Soit $1<p<\infty$ et $q$ tel que $1/p+1/q=1$.
L'inégalité de Hölder assure que pour tous $f$ de $\ell^p$ et $g$ de $\ell^q$, la norme $||\cdot||_1$ de $fg$ est majorée par le produit $||f||_p ||g||_q$.
On en déduit donc
\[\left( \sum_{e/\sim} |c(e)|^p \right)^{1/p} \left( \sum_{e/\sim} \text{e}^{-\epsilon q \cdot d(e, \text{géod}(x,y))} \right)^{1/q}
\geq \sum_{e/\sim} |c(e)| \text{e}^{-\epsilon \cdot d(e, \text{géod}(x,y))}
\geq \beta \cdot d(x,y),\]
\noindent
la première inégalité résultant de l'inégalité de Hölder et la deuxième du corollaire~\ref{cor-prop-principale}.
Or, l'espace $(X,d)$ étant uniformément localement fini, le nombre de sommets à distance $k$ de $\text{géod}(x,y)$ est au plus à croissance exponentielle en~$k$.
Plus précisément, il existe un réel~$\beta'$ strictement positif et une constante multiplicative~$D$ tels que, pour $x$ distinct de~$y$,
\[\text{card} (\{z \in X \, ; \, d(z, \text{géod}(x,y)) \leq k \}) \leq D \cdot \text{e}^{\beta' k} d(x,y),\]
\noindent
ce qui permet de donner la minoration suivante
\[\left( \sum_{e/\sim} \text{e}^{-\epsilon q \cdot d(e, \text{géod}(x,y))} \right)^{1/q}
\leq D^{1/q} \left( \sum_{k=0}^{\infty} \text{e}^{(\beta'-\epsilon q) k} \right)^{1/q} d(x,y)^{1/q}.\]
La série $\sum_k \text{e}^{(\beta'-\epsilon q) k}$ ci-dessus est convergente si et seulement si $q>\beta'/\epsilon$.
Dans ce cas, en notant $D'$ la somme de cette série, on en déduit l'énoncé du lemme pour tout $1 < p < \beta'/(\beta'-\epsilon)$ en posant $\alpha' = \beta/(D D')^{1/q}$.
\end{proof}

Ceci termine la démonstration du théorème~\ref{thm-principal} puisque, étant donné un point~$o$ de $X$, l'action de $G$ sur $X$ étant propre, la proposition~\ref{prop2} assure alors que le cocycle $g \longmapsto \delta_{o} - \delta_{g\cdot o} $ à valeurs dans $\partial(\ell^p(X^1))$ est propre pour tout $p$ suffisamment proche de 1 (la continuité du cocycle résultant du fait qu'il est localement constant, comme conséquence de la continuité de $g \longmapsto g \cdot o$ de $G$ dans $X$ discret).
\end{proof}

\vspace{0.5cm}

\noindent
{\bf Remerciements :}
Nous remercions Mikael de la Salle pour de stimulantes discussions et Peter Haissinsky pour des références concernant la métrique visuelle.



\begin{thebibliography}{GdlH90}

\bibitem[AL16]{alvarez-lafforgue}
A.~Alvarez et V.~Lafforgue.
\newblock{\em Actions affines isométriques propres des groupes hyperboliques sur des espaces $\ell^{p}$.}
\newline
\newblock {\em Expositiones mathematicae}. Vol. 34, p. 236-257 (2016).


\bibitem[GdlH90]{harpehyper}
E.~Ghys et P.~de~la Harpe, éditeurs.
\newblock{\em Sur les groupes hyperboliques d'après Mikhael Gromov.}
\newline
\newblock{\em Progress in Mathematics}. Vol. 83. Birkhäuser Boston Inc.(1990).


\bibitem[G87]{gromov}
M.~Gromov.
\newblock{\em Hyperbolic groups.}
\newline
\newblock{Math. Sci. Res. Inst. Publ.} Vol. 8 of {\em Essays in group theory}. Springer, New York (1987).


\bibitem[N09]{nowak09}
P.~W.~Nowak.
\newblock{\em Group actions on Banach spaces and a geometric characterization of a-T-menability.}
\newline
\newblock{\em Topology and its Applications} (2009).


\bibitem[Yu05]{yu}
G.~Yu.
\newblock{\em Hyperbolic groups admit proper affine isometric actions on $\ell^{p}$ spaces.}
\newline
\newblock {\em Geom. and Funct. Anal.} Vol. 15, p. 1144-1151 (2005).


\end{thebibliography}
\end{document}